\documentclass[reqno,11pt]{article}

\sloppy
\usepackage{a4wide,bm}
\usepackage[pdfborder={0 0 0}]{hyperref}  
\usepackage{color,eucal,enumerate,mathrsfs}
\usepackage[normalem]{ulem}
\usepackage{amsmath,amssymb,epsfig,bbm}
\numberwithin{equation}{section}
\usepackage{dsfont}
\usepackage{tikz}



\usepackage[latin1]{inputenc}

%

\renewcommand{\H}{\mathbb{H}}

\newcommand{\N}{\mathbb{N}}

\newcommand{\R}{\mathbb{R}}



\newcommand{\mm}{{\mbox{\boldmath$m$}}}










\newcommand{\sfd}{{\sf d}}


\newcommand{\Kliminf}{K\kern-3pt-\kern-2pt\mathop{\rm lim\,inf}\limits}  
\newcommand{\supp}{\mathop{\rm supp}\nolimits}   
\renewcommand{\d}{{\mathrm d}}

\newcommand{\restr}[1]{\lower3pt\hbox{$|_{#1}$}}
\newcommand{\la}{{\langle}}                  
\newcommand{\ra}{{\rangle}}
  
\newcommand{\nchi}{{\raise.3ex\hbox{$\chi$}}}


\setlength{\marginparwidth}{3cm}

\newcommand{\fr}{\penalty-20\null\hfill$\blacksquare$}                      




\renewcommand{\mm}{\mathfrak m}                                


\newenvironment{proof}{\removelastskip\par\medskip   
\noindent{\em proof} \rm}{\penalty-20\null\hfill$\square$\par\medbreak}

\newtheorem{theorem}{Theorem}[section]

\newtheorem{lemma}[theorem]{Lemma}
\newtheorem{proposition}[theorem]{Proposition}

\newtheorem{conjecture}[theorem]{Conjecture}

\newtheorem{definition}[theorem]{Definition}

\newtheorem{remark}[theorem]{Remark}

\newcommand{\test}[1]{{\rm Test}(#1)}

\newcommand{\X}{{\rm M}}

\newcommand{\vol}{{\rm vol}}

\renewcommand{\div}{{\rm div}}
\newcommand{\vsm}{{\rm TestV}(\X)}
\newcommand{\fsm}{{\rm Test}(\X)}

\newcommand{\HS}{{\lower.3ex\hbox{\scriptsize{\sf HS}}}}
\renewcommand{\H}[1]{{\rm Hess}(#1)}

\newcommand{\RCD}{{\sf RCD}}
\newcommand{\CD}{{\sf CD}}

\newcommand{\ru}[3]{{\bold R}(#1,#2)(#3)}
\newcommand{\rd}[4]{\bm{\mathcal R}(#1,#2,#3,#4)}

\newcommand{\bsq}[2]{\pmb [#1,#2\pmb]}
\newcommand{\bnabla}{\bm\nabla}
\newcommand{\distrlie}[2]{\bsq{#1}{#2}}

\setcounter{tocdepth}{4}

\title{Riemann curvature tensor on ${\sf RCD}$ spaces and possible applications}

\begin{document}

\author{Nicola Gigli \thanks{SISSA, Trieste. email: \textsf{ngigli@sissa.it}}
   }

\maketitle

\begin{abstract}  
We show that on every $\RCD$ spaces it is possible to introduce, by a distributional-like approach, a Riemann curvature tensor. 

Since after the works of Petrunin and Zhang-Zhu we know that finite dimensional Alexandrov spaces are $\RCD$ spaces, our construction applies in particular to the Alexandrov setting.  We conjecture that an $\RCD$ space is Alexandrov if and only if the sectional curvature - defined in terms of such abstract Riemann tensor - is bounded from below.

\bigskip

{\centerline{\bf R\'esum\'e}}

\medskip

Nous montrons que sur chaque espace $\RCD$ il est possible d'introduire, par une approche distributionnelle, un tenseur de courbure de Riemann.

Puisque apr\`es les travaux de Petrunin et de Zhang-Zhu nous savons que les espaces d'Alexandrov de dimension finie sont des espaces  RCD, notre construction s'applique en particulier au cadre d'Alexandrov. Nous conjecturons qu'un espace $\RCD$ est Alexandrov si et seulement si la courbure sectionnelle - d\'efinie en termes de ce tenseur de Riemann abstrait - est born\'ee par  dessous.

\end{abstract}

\tableofcontents

\section{Introduction} 

One of the outcomes of the tensor calculus on $\RCD$ spaces built in \cite{Gigli14} is the existence of a measure-valued Ricci tensor. An example of application of this object to the study of the geometry of such spaces is given by the paper \cite{Han2018} where, inspired by some more formal computations due to Sturm \cite{Sturm14}, it is shown that transformations of the metric-measure structure (like, e.g., conformal ones) alter lower Ricci curvature bounds as in the smooth context. 

Stability of the ${\sf CAT}(0)$ condition under appropriate conformal transformations has been shown in \cite{LytSta18} with different techniques, in particular without relying on any sort of tensor calculus, but, notably, the same kind of question in the context of Alexandrov spaces is open, see e.g.\ \cite{Petrunin07}.  In this paper we want to propose an attack plan to such problem: our idea is to leverage on the available calculus tools on the $\RCD$ setting to produce a sectional curvature tensor on Alexandrov spaces, so that then, hopefully, appropriate transformation formulas can be studied. In this direction, recall that thanks to Petrunin's \cite{Petrunin11} and Zhang-Zhu's results \cite{ZhangZhu10} we know that a $n$-dimensional Alexandrov space with curvature bounded from below by $k$ is $\CD(k(n-1),n)$, in line with the smooth setting. Since moreover Alexandrov spaces are infinitesimally Hilbertian (see \cite{KMS01}; the notion of Sobolev function used in this work is different from the one - introduced in \cite{Cheeger00} - typically adopted on metric measure spaces, yet the two are easily seen to be equivalent thanks to the structural properties of Alexandrov spaces pointed out in  \cite{KMS01}), we see that a finite dimensional Alexandrov space is always a $\RCD$ space and thus all the calculus tools available in the latter setting are at disposal also in the former.

An interesting fact, and the main point of this current manuscript, is that on a $\RCD$ space it is possibile to give a meaning, in a kind of distributional sense, to the full Riemann curvature tensor. To see why, start recalling that the typical term in the definition of $R(X,Y,Z,W)$ on the smooth setting is $\la\nabla_X\nabla_YZ,W\ra$ and observe that it seems hard to give a `direct' meaning to such expression on $\RCD$ spaces, because it is unclear whether there are vector fields regular enough to be covariantly differentiated twice (we believe in general there aren't many of these). Instead, we have `many' (i.e.\ $L^2$-dense) vector fields which are bounded and with covariant derivative in $L^2$, and this allows to give a `weak' meaning to such object: indeed, multiplying  $\la\nabla_X\nabla_YZ,W\ra$  by a smooth function $f$, integrating w.r.t.\ the volume measure and then integrating by parts we obtain
\[
\int f \la\nabla_X\nabla_YZ,W\ra\,\d\vol=-\int\la\nabla_YZ,\nabla_X(fW)\ra+f\div X\la\nabla_YZ,W\ra\,\d\vol,
\]
and from what we just said we see that the right hand side is well-defined for `many' vector fields and functions. Thus following this line of thought it is possible to define, for any $X,Y,Z,W$ sufficiently smooth vector fields, the Riemann curvature tensor $\rd XYZW$ as a real valued operator acting on a space of sufficiently smooth functions. In more precise terms, we shall work with the spaces of \emph{test functions} $\test\X$ and \emph{test vector fields} $\vsm$, see the beginning of the next section for the definitions.

\bigskip

In particular we also have a sectional curvature operator on $\RCD$ spaces, and a fortiori  on finite dimensional Alexandrov spaces. We conjecture that lower bounds of such sectional curvature  are equivalent to the Alexandrov condition in the following sense:
\begin{conjecture} Let $(\X,\sfd)$ be a complete and separable metric space. Then the following are equivalent:
\begin{itemize}
\item[i)] $(\X,\sfd)$ is a $n$-dimensional Alexandrov space of curvature bounded from below by $k\in\R$.
\item[ii)] $(\X,\sfd,\mathcal H^n)$ is a $\RCD(k(n-1),n)$ space, $\supp(\mathcal H^n)=\X$ and 
\[
\rd XYYX(f)\geq k\int f|X\wedge Y|^2\,\d\mm\qquad\forall f\in \test\X,\ f\geq0,\ X,Y\in\vsm.
\]
\end{itemize}
\end{conjecture}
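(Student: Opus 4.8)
The plan is to prove the two implications separately, treating (i)$\Rightarrow$(ii) as a consistency statement — the abstract tensor $\bm{\mathcal R}$ must reproduce the classical sectional bound — and (ii)$\Rightarrow$(i) as a reconstruction statement, recovering the synthetic Toponogov comparison from the weak inequality. For the first direction, membership in $\RCD(k(n-1),n)$ together with $\supp(\mathcal H^n)=\X$ comes for free from the results of Petrunin \cite{Petrunin11} and Zhang--Zhu \cite{ZhangZhu10}, so that the entire content lies in establishing the distributional inequality $\rd XYYX(f)\ge k\int f|X\wedge Y|^2\,\d\mm$ for nonnegative $f\in\test\X$ and $X,Y\in\vsm$.

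For (i)$\Rightarrow$(ii) I would exploit the fine structure of finite-dimensional Alexandrov spaces: by the work of Otsu--Shioya and Perelman the regular set $\mathcal R$ carries a (DC-)Riemannian metric and has full $\mathcal H^n$-measure, while the singular set $\X\setminus\mathcal R$ is $\mathcal H^n$-negligible and small in codimension. On $\mathcal R$ the Alexandrov curvature bound is precisely the pointwise inequality $\mathrm{sec}\ge k$, so the first step is to check that the distributional definition of $\bm{\mathcal R}$ — obtained through the integration by parts displayed in the introduction — reduces on $\mathcal R$ to the classical integral $\int f\,\mathrm{sec}(X,Y)|X\wedge Y|^2\,\d\mathcal H^n$ for test objects. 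Integrating the pointwise bound against $f\ge0$ then yields the desired estimate up to the contribution of the singular set. The delicate point is exactly this singular contribution: the integration by parts may concentrate additional terms on $\X\setminus\mathcal R$, and one must show that these carry the correct (nonnegative) sign, in the spirit of the positive concentrated curvature produced by conical points in Gauss--Bonnet type formulas.

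For the converse (ii)$\Rightarrow$(i) the natural route is local-to-global. First, a localization argument — choosing $f$ concentrating at a regular point $x$ and test vector fields $X,Y$ realizing a prescribed $2$-plane in the Euclidean tangent space $T_x\X$ — should upgrade the integral inequality to the pointwise bound $\mathrm{sec}\ge k$ $\mathcal H^n$-a.e.\ on $\mathcal R$, using that $\RCD(k(n-1),n)$ spaces with full-support $\mathcal H^n$ are non-collapsed and hence almost everywhere infinitesimally Euclidean. Second, one must promote this a.e.\ infinitesimal bound to the genuine synthetic comparison for geodesic triangles. Here I would attempt to read the Alexandrov condition through the convexity and Hessian properties of squared-distance functions, for which the $\RCD$ calculus (Bochner inequality, Hessian estimates) already supplies substantial control, combined with the stability of lower sectional bounds under pointed Gromov--Hausdorff limits and the density of $\mathcal R$.

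The main obstacle is precisely this second step. Unlike lower Ricci bounds, which admit a clean synthetic formulation via optimal transport that matches the distributional $\ric$ tensor of \cite{Gigli14}, there is no known equivalent transport characterization of lower sectional curvature bounds; the Alexandrov condition is an intrinsically global statement about the comparison of geodesic triangles, whereas $\bm{\mathcal R}$ encodes only infinitesimal information. Bridging this local-to-global gap — turning an a.e.\ pointwise curvature bound on the regular set into Toponogov comparison on the whole of $\X$ — is the crux of the conjecture and is likely to demand genuinely new ideas rather than a routine transfer of the Ricci theory; by comparison, the sign analysis of the distributional tensor at singular points needed for the first direction is a secondary, though still nontrivial, difficulty.
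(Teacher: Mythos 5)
This statement is a \emph{conjecture}: the paper offers no proof of it, and explicitly records that the only piece currently known is that (i) implies the $\RCD(k(n-1),n)$ condition with $\supp(\mathcal H^n)=\X$ (via Kuwae--Machigashira--Shioya, Petrunin and Zhang--Zhu). So there is no proof in the paper to compare yours against, and your text should be read as an attack plan rather than a proof. As such it is a sensible plan --- it correctly isolates the two implications, correctly notes that the $\RCD$ membership in (ii) is already known, and correctly identifies the local-to-global passage in (ii)$\Rightarrow$(i) as the central obstruction --- but neither direction is actually established, and both contain gaps that are the substance of the open problem rather than technical details.

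Concretely: for (i)$\Rightarrow$(ii), the step ``check that the distributional definition of $\bm{\mathcal R}$ reduces on the regular set to the classical integral $\int f\,{\rm sec}(X,Y)|X\wedge Y|^2\,\d\mathcal H^n$'' is not a verification but an open problem in itself. The Otsu--Shioya/Perelman structure gives a DC (hence merely $BV$-regular) metric on the regular part, so the ``classical'' curvature there is itself only a measure-valued object; identifying it with the algebraically defined $\bm{\mathcal R}$ of the paper, and then showing that whatever concentrates on the singular set has the right sign, is precisely the kind of result the conjecture is asking for, not a secondary difficulty. For (ii)$\Rightarrow$(i), your localization step already has a gap before the acknowledged local-to-global one: elements of $\vsm$ are finite sums $\sum f_i\nabla g_i$ with no pointwise control at a prescribed point, so ``choosing $X,Y$ realizing a prescribed $2$-plane in $T_x\X$'' and extracting an $\mathcal H^n$-a.e.\ pointwise bound ${\rm sec}\ge k$ from the integral inequality is not justified --- indeed a pointwise sectional curvature is not even defined on an $\RCD$ space outside the distributional framework. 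Since you yourself flag that the subsequent passage to Toponogov comparison ``is likely to demand genuinely new ideas,'' the honest conclusion is that the proposal does not prove the statement; it is a (reasonable) research programme for an open conjecture.
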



Here and below $\mathcal H^n$ is the $n$-dimensional Hausdorff measure. Let us collect some comments about this conjecture. First of all, the already recalled results by Kuwae-Machigashira-Shioya, Petrunin and Zhang-Zhu grant that if $(i)$ holds then $(\X,\sfd,\mathcal H^n)$ is a $\RCD(k(n-1),n)$ space such that $\supp(\mathcal H^n)=\X$. This is all is known so far about the relation between $(i)$ and $(ii)$. We also point out that $\RCD(K,N)$ spaces for which the reference measure is $\mathcal H^N$ are called \emph{non-collapsed} $\RCD$ spaces (see \cite{GDP17}) and are more regular than generic $\RCD$ spaces: they are the synthetic analogue of the \emph{non-collapsed Ricci limit spaces} introduced by Cheeger-Colding in \cite{Cheeger-Colding97I}.

Finally, we remark that the abstract tensor calculus developed in \cite{Gigli14}  has already been shown to have strong links with the geometry of $\RCD$ spaces. As non-exhaustive list of recent results where it has been used as key (but certainly not exclusive) tool, let us mention: the link between dimension of the first cohomology group and geometry of the underlying space \cite{GR17}, the constant dimension property of $\RCD$ spaces \cite{BS18}, the regularity that comes from imposing both a lower Ricci bound and an upper sectional bound on the space \cite{KK18}.

\section{Riemann curvature tensor on $\RCD(K,\infty)$ spaces}

To keep this note short, we shall assume the reader familiar with the notion of $\RCD$ space and with the calculus developed in    \cite{Gigli14}. Throughout this note  $(\X,\sfd,\mm)$ will be a fixed $\RCD(K,\infty)$ space, $K\in\R$.

We recall that in \cite{Savare13} it has been introduced the space of `test functions'
\[
\test\X:=\big\{f\in D(\Delta)\subset W^{1,2}(\X)\ :\ {\rm Lip}(f)<\infty,\ f\in L^\infty(\X),\ \Delta f\in W^{1,2}(\X)\big\}
\]
and proved that this is an algebra dense in $W^{1,2}(\X)$. Then in \cite{Gigli14} the space of `test vector fields' has been defined as
\[
\vsm:=\Big\{\sum_{i=1}^nf_i\nabla g_i\ :\ i\in \N,\ f_i,g_i\in \test\X\Big\}.
\]
We won't put any topology on the vector space $\vsm$ (see Remark \ref{rem:distr} for comments in this direction); here we just notice that the product of a function in $\fsm$ and a vector field in $\vsm$ is still an element of $\vsm$, i.e.\ $\vsm$ is a module over $\fsm$.
\begin{definition}[The space $\vsm'$] The vector space $\vsm'$ is the space of all linear maps from  $\vsm$ to $\R$, i.e.\ the dual of $\vsm$ in the algebraic sense.
\end{definition}
The vector space $\vsm'$  comes with the structure of module over $\fsm$, the product of  an operator  $T\in \vsm'$ and a function  $f\in\fsm$ being given by the formula
\[
(fT)(W):=T(fW),\qquad\forall W\in\vsm.
\]
We shall think at the space $\vsm'$ as a kind of `space of vector-valued distributions' on $\X$: our differentiation operators for objects with low regularity will take value in $\vsm'$.

\bigskip

Notice  that the compatibility with the metric of the covariant derivative and the very definition of divergence yield
\begin{equation}
\label{eq:perdistrcov}
\int\la\nabla_XY,W\ra\,\d\mm=\int- \la\nabla_XW,Y\ra- \la Y,W\ra\,\div(X)\,\d\mm\qquad\forall X,Y,W\in \vsm.
\end{equation}
We therefore propose the following definition:
\begin{definition}[Distributional covariant derivative]
Let $X,Y\in L^2(T\X)$ be with $X\in D(\div)$ and so that at least one of $|X|$ and $|Y|$ is in $L^\infty(\X)$. Then $\bnabla_XY\in\vsm'$ is defined as
\begin{equation}
\label{eq:defbnabla}
\bnabla_XY(W):=\int -\la\nabla_XW,Y\ra-\la Y,W\ra\div(X)\,\d\mm\qquad\forall W\in\vsm.
\end{equation}
\end{definition}
Notice that it holds
\[
|\la \nabla_XW,Y\ra-\la Y,W\ra\div(X)|\leq |\nabla W|_{\sf HS}|X||Y|+|Y||W||\div X|,
\]
and recall that for $W\in\vsm$ it holds $|W|\in L^\infty(\X)$ and $|\nabla W|_{\sf HS}\in L^2(\X)$, while $X\in  D(\div)$ means that $\div (X)\in L^2(\X)$. Thus the integral in \eqref{eq:defbnabla} is well defined and the definition of $\bnabla_XY$ is well posed.

The identity \eqref{eq:perdistrcov} grants consistency, i.e.: if  $Y\in W^{1,2}_C(T\X)$ and $X\in D(\div)$ then
\begin{equation}
\label{eq:conscov}
\bnabla_XY(W)=\int\la\nabla_XY,W\ra\,\d\mm,\qquad\forall W\in\vsm.
\end{equation}
Having a notion of distributional covariant derivative leads to the one of distributional Lie bracket:
\begin{definition}[Distributional Lie bracket]
Let $X,Y\in L^2(T\X)$ be with $X,Y\in D(\div)$ and so that at least one of $|X|$ and $|Y|$ is in $L^\infty(\X)$. Then $\distrlie XY\in\vsm'$ is defined as
\[
\distrlie XY:=\bnabla_XY-\bnabla_YX.
\]
\end{definition}
The assumptions we made on $X,Y$ grant that both $\bnabla_XY$ and $\bnabla_YX$ are well defined. Hence so is the case for $\distrlie XY$.
Also, the identity \eqref{eq:conscov} grants that if $X,Y\in W^{1,2}_C(T\X)\cap D(\div)$, then
\[
\distrlie XY(W)=\int \la[X,Y],W\ra\,\d\mm\qquad\forall W\in \vsm.
\]
One of the basic properties of the Lie bracket of smooth vector fields on smooth manifolds is the Jacobi identity, in which two consecutive applications of the brackets occur. One might certainly wonder whether the same holds on $\RCD$ spaces, but in such setting we don't have vector fields regular enough to make twice the Lie bracket operation: the notion of distributional Lie bracket helps in this direction. We start with the following lemma:
\begin{lemma}
For every $X,Y\in\vsm$ and $h:\X\to\R$ Lipschitz and bounded we have $h[X,Y]\in D(\div)$ and
\begin{equation}
\label{eq:divle}
\div(h[X,Y])=\div(X\div(hY)-Y\div(hX)).
\end{equation}
In particular, $[X,Y]\in D(\div)$.
\end{lemma}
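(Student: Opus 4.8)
The plan is to use the weak characterization of the divergence: a vector field $Z\in L^2(T\X)$ belongs to $D(\div)$ with $\div Z=g\in L^2(\X)$ precisely when $\int\la Z,\nabla f\ra\,\d\mm=-\int gf\,\d\mm$ for every $f\in\test\X$ (and then, by $W^{1,2}$-continuity of both sides, for every $f\in W^{1,2}(\X)$). A preliminary observation is that for $X,Y\in\vsm$ one has $\div X,\div Y\in W^{1,2}(\X)$: writing $X=\sum_i a_i\nabla\alpha_i$ we have $\div X=\sum_i\la\nabla a_i,\nabla\alpha_i\ra+a_i\Delta\alpha_i$, and each summand is in $W^{1,2}(\X)$ because $\la\nabla a_i,\nabla\alpha_i\ra\in W^{1,2}(\X)$ by the calculus on $\RCD$ spaces, $\Delta\alpha_i\in W^{1,2}(\X)$ by the very definition of $\test\X$, and $a_i$ is bounded and Lipschitz. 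I will first treat the case $h\equiv 1$ and then bootstrap to general $h$.

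For $h\equiv 1$ set $V_1:=\div Y\,X-\div X\,Y$ and compute $\int\la V_1,\nabla f\ra\,\d\mm$ directly. Since $X\in D(\div)$ and $f\,\div Y\in W^{1,2}(\X)$, the defining property of $\div X$ applied to the product $f\,\div Y$ gives $\int\div Y\,\la X,\nabla f\ra\,\d\mm=-\int f\big(\div X\,\div Y+\la X,\nabla\div Y\ra\big)\,\d\mm$, and symmetrically for the other term. Subtracting the two identities, the contributions $\int f\,\div X\,\div Y\,\d\mm$ cancel; this cancellation is the crux, because $\div X\,\div Y$ is only a product of two $L^2(\X)$ functions, hence a priori merely $L^1(\X)$, so that neither $\div Y\,X$ nor $\div X\,Y$ need belong to $D(\div)$ on its own. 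What survives is $\int\la V_1,\nabla f\ra\,\d\mm=-\int f\big(\la X,\nabla\div Y\ra-\la Y,\nabla\div X\ra\big)\,\d\mm$, whose kernel $\la X,\nabla\div Y\ra-\la Y,\nabla\div X\ra$ lies in $L^2(\X)$ because $\nabla\div X,\nabla\div Y\in L^2(\X)$ and $X,Y$ are bounded. Hence $V_1\in D(\div)$.

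Next I establish, for arbitrary bounded Lipschitz $h$, the weak identity $\int\la X\div(hY)-Y\div(hX),\nabla f\ra\,\d\mm=\int\la h[X,Y],\nabla f\ra\,\d\mm$ for all $f\in\test\X$. Here $hX,hY\in D(\div)$ by the Leibniz rule for bounded Lipschitz functions, and the functions $\la X,\nabla f\ra,\la Y,\nabla f\ra$ lie in $W^{1,2}(\X)\cap L^\infty(\X)$ (again because test functions form an algebra and $\la\nabla\cdot,\nabla\cdot\ra\in W^{1,2}(\X)$), so they are admissible in the integration by parts defining $\div(hX)$ and $\div(hY)$. Carrying it out yields $\int\la X\div(hY)-Y\div(hX),\nabla f\ra\,\d\mm=\int h\big(\la X,\nabla\la Y,\nabla f\ra\ra-\la Y,\nabla\la X,\nabla f\ra\ra\big)\,\d\mm$, and the bracket‑on‑functions identity $\la X,\nabla\la Y,\nabla f\ra\ra-\la Y,\nabla\la X,\nabla f\ra\ra=\la[X,Y],\nabla f\ra$ — valid $\mm$-a.e.\ for test objects and resting on the symmetry of the Hessian of $f$ — turns the right‑hand side into $\int\la h[X,Y],\nabla f\ra\,\d\mm$, as claimed.

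Finally I combine the pieces. Taking $h\equiv 1$ in the weak identity shows that $[X,Y]-V_1$ pairs to zero against every $\nabla f$, hence belongs to $D(\div)$ with vanishing divergence; together with $V_1\in D(\div)$ this gives $[X,Y]=V_1+([X,Y]-V_1)\in D(\div)$, which is the final assertion of the lemma. For bounded Lipschitz $h$ the Leibniz rule then yields $h[X,Y]\in D(\div)$. Knowing this, the weak identity says that $X\div(hY)-Y\div(hX)$ and $h[X,Y]$ have identical pairing with all gradients, so their difference is divergence‑free; adding it to $h[X,Y]\in D(\div)$ we conclude $X\div(hY)-Y\div(hX)\in D(\div)$ with $\div\big(X\div(hY)-Y\div(hX)\big)=\div(h[X,Y])$, i.e.\ \eqref{eq:divle}. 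The one point deserving care throughout is exactly the $L^2$-integrability of the divergences: it never holds term by term, since the offending products of two $L^2(\X)$ functions are only $L^1(\X)$, and it is recovered solely through the antisymmetric cancellations, first within $V_1$ and then, via the divergence‑free difference, in the general statement.
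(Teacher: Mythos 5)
Your argument is correct, and its computational core --- pairing against $\nabla f$, integrating by parts through $\div(hX)$ and $\div(hY)$, and cancelling the two Hessian terms by the symmetry of $\H f$ --- is exactly the computation in the paper's proof. Where you genuinely depart from the paper is in the logical packaging. The paper's displayed chain begins with $\int f\,\div\big(X\div(hY)-Y\div(hX)\big)\,\d\mm$, which tacitly presupposes that $X\div(hY)-Y\div(hX)$ already belongs to $D(\div)$; as you rightly emphasize, this is not clear term by term, since $\div(X)\div(hY)$ is a priori only in $L^1(\X)$. You repair this by first exhibiting an explicit $L^2$ representative of the divergence in the case $h\equiv 1$, namely $\div(X\div Y-Y\div X)=\la X,\nabla\div Y\ra-\la Y,\nabla\div X\ra$, which is legitimate because $\div X,\div Y\in W^{1,2}(\X)$ for $X,Y\in\vsm$ (using that $\la\nabla a,\nabla\alpha\ra\in W^{1,2}(\X)$ for test functions), and then bootstrapping: $[X,Y]\in D(\div)$, hence $h[X,Y]\in D(\div)$ by the Leibniz rule for bounded Lipschitz multipliers, hence $X\div(hY)-Y\div(hX)$ differs from $h[X,Y]$ by a divergence-free field, which yields \eqref{eq:divle}. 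This three-step route buys a fully justified statement, together with an explicit formula for $\div[X,Y]$, whereas the paper's one-line version leaves the membership $X\div(hY)-Y\div(hX)\in D(\div)$ implicit. Two minor remarks: your convention $[X,Y]=\nabla_XY-\nabla_YX$ produces the opposite ordering from the paper's displayed $\nabla_YX-\nabla_XY$ (a sign slip on the paper's side), and your passage from $f\in\test\X$ to all $f\in W^{1,2}(\X)$ by density is indeed needed, and available, to invoke the definition of $D(\div)$.
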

\begin{proof}
Let $f\in\fsm$ and notice that
\[
\begin{split}
\int f \div(X\div(hY)-Y\div(hX))\,\d\mm&=\int -\la \nabla f,X\ra\div(hY)+\la \nabla f,Y\ra\div(hX)\,\d\mm\\
&=\int h\la \nabla f,\nabla_YX-\nabla_XY\ra\,\d\mm=\int\la\nabla f,h[X,Y]\ra\,\d\mm.
\end{split}
\]
Thus the first claim follows from the density of $\fsm$ in $W^{1,2}(\X)$ and the very definition of $D(\div)$ and divergence. Then the last claim comes by picking $h\equiv 1$.
\end{proof}
A direct consequence of such lemma is that for $X,Y,Z\in\vsm$ the object $\distrlie{[X,Y]}Z\in\vsm'$ is always well defined. We then have:
\begin{proposition}[Jacobi identity]
For every $X,Y,Z\in\vsm$ we have
\[
\distrlie{[X,Y]}Z+\distrlie{[Y,Z]}X+\distrlie{[Z,X]}Y=0.
\]
\end{proposition}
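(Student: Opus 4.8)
The plan is to test the claimed identity against an arbitrary element of $\vsm$ and to exploit the symmetry of the Hessian in order to discard the only genuinely second-order contribution, thereby reducing the whole statement to first-order quantities that can be reorganised by means of \eqref{eq:divle}. Since every $W\in\vsm$ is a finite $\R$-linear combination of fields of the form $f\nabla g$ with $f,g\in\test\X$, and since every operator in $\vsm'$ is by definition $\R$-linear, it suffices to prove that the cyclic sum, evaluated at $W=f\nabla g$, vanishes for all $f,g\in\test\X$. All three summands make sense: by the previous lemma the inner brackets $[X,Y],[Y,Z],[Z,X]$ belong to $D(\div)$, while $X,Y,Z\in\vsm$ are bounded, so the hypotheses of the distributional bracket are met.

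The key preliminary step is an explicit formula for $\distrlie UV(f\nabla g)$ valid whenever $U\in D(\div)$ and $V\in\vsm$. Using $\nabla_U(f\nabla g)=\la U,\nabla f\ra\,\nabla g+f\,\nabla_U\nabla g$ together with $\la\nabla_U\nabla g,V\ra=\H{g}(U,V)$ in \eqref{eq:defbnabla} gives
\[
\bnabla_UV(f\nabla g)=\int-\la U,\nabla f\ra\la V,\nabla g\ra-f\,\H{g}(U,V)-f\la V,\nabla g\ra\,\div U\,\d\mm.
\]
Subtracting the same expression with the roles of $U$ and $V$ exchanged, the two Hessian contributions cancel because $\H{g}$ is symmetric, and one is left with
\[
\distrlie UV(f\nabla g)=\int\big[\la V,\nabla f\ra\la U,\nabla g\ra-\la U,\nabla f\ra\la V,\nabla g\ra\big]+f\big[\la U,\nabla g\ra\,\div V-\la V,\nabla g\ra\,\div U\big]\,\d\mm.
\]
This is the decisive point: on gradient-type test fields the term requiring a second covariant derivative disappears, so the right-hand side is meaningful even though $U=[X,Y]$ is not covariantly differentiable.

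It then remains to insert $(U,V)=([X,Y],Z)$ and its cyclic permutations into this formula and to verify the cancellation. I would rewrite every action of an inner bracket on a function through the a.e.\ identity $\la[X,Y],\nabla h\ra=\la X,\nabla\la Y,\nabla h\ra\ra-\la Y,\nabla\la X,\nabla h\ra\ra$ (for $h=f,g$; this is again a consequence of the symmetry of $\H{h}$), and I would treat the single surviving term containing the divergence of an inner bracket, namely $-\int f\la Z,\nabla g\ra\,\div[X,Y]\,\d\mm$, by integrating by parts — legitimate because $f\la Z,\nabla g\ra\in W^{1,2}(\X)\cap L^\infty(\X)$ and $[X,Y]\in D(\div)$ — and then invoking \eqref{eq:divle} to replace $\div[X,Y]$ by $\div X$ and $\div Y$. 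After these two reductions every summand is an honest $L^1$-integral of products of first-order quantities — actions of $X,Y,Z$ on $f,g$ and divergences of $X,Y,Z$ — and the cyclic sum cancels by direct inspection, this being nothing but the integrated form of the classical Jacobi identity.

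The main obstacle is precisely the low regularity of the inner bracket: $[X,Y]\notin W^{1,2}_C(T\X)$, so that neither $\nabla_Z[X,Y]$ nor $\nabla\la[X,Y],\nabla g\ra$ exists, and one cannot symmetrise by a single integration by parts as in the smooth proof. The two devices that bypass this difficulty are the Hessian cancellation above, which removes the contribution that would call for a second derivative of the fields, and the divergence lemma \eqref{eq:divle}, which is the only admissible way to dispose of $\div[X,Y]$, trading it for the divergences of the genuinely regular fields $X$ and $Y$. Once both are in place the remainder of the argument is a bookkeeping exercise that parallels, term by term, the elementary proof of the Jacobi identity for smooth vector fields.
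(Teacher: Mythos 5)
Your proof is correct and follows essentially the same route as the paper's: the formula you derive for $\distrlie UV(f\nabla g)$ via the Leibniz rule and the symmetry of the Hessian is precisely the paper's identity \eqref{eq:lief}, and the subsequent substitution of $([X,Y],Z)$ together with \eqref{eq:divle} to dispose of $\div([X,Y])$, followed by integration by parts and cyclic cancellation, matches the paper's argument step for step. No changes needed.
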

\begin{proof} 
We need to prove that for a generic $W\in\vsm$ we have
\[
\distrlie{[X,Y]}Z(W)+\distrlie{[Y,Z]}X(W)+\distrlie{[Z,X]}Y(W)=0,
\]
and by linearity we can assume that $W=g\nabla f$ for generic $f,g\in\fsm$. 

We now claim that for every $f,g\in\fsm$ it holds
\begin{equation}
\label{eq:lief}
\distrlie XY(g\nabla f)=\int-X(g)Y(f)-gY(f)\div(X)+Y(g)X(f)+gX(f)\div(Y)\,\d\mm,
\end{equation}
indeed we have
\[
\begin{split}
\bnabla_XY(g\nabla f)&=\int -\la\nabla_X(g\nabla f),Y\ra-gY(f)\div(X)\,\d\mm\\
&=\int-g\H f(X,Y)-X(g)Y(f)-gY(f)\div(X)\,\d\mm,
\end{split}
\]
so that the claim follows subtracting the analogous identity for $\bnabla_YX(g\nabla f)$ using the symmetry of the Hessian.

Replacing $X$ with $[X,Y]$ and $Y$ by $Z$ in \eqref{eq:lief} and using \eqref{eq:divle} with $h\equiv 1$ we obtain 
\[
\begin{split}
\distrlie{[X,Y]}Z(g\nabla f)=\int& -Z(f)\big(XY(g)-YX(g)\big)-\div(X)\big(Y(g)Z(f)+gYZ(f)\big)\\
&+\div(Y)\big(X(g)Z(f)+gXZ(f)\big)+Z(g)\big(XY(f)-YX(f)\big)\\
&+g\div(Z)(XYf-YXf)\,\d\mm.
\end{split}
\]
Adding up the terms obtained by ciclic permutation of $X,Y,Z$  and using the trivial identity
\[
\begin{split}
\int X(f)Y(g)\div(Z)\,\d\mm=-\int Z(X(f))Y(g)+X(f)Z(Y(g))\,\d\mm,
\end{split}
\]
and the `permuted' ones, we get the conclusion.
\end{proof}
We are now ready to give the main definition of this note:
\begin{definition}[Distributional curvature tensor]
For $X,Y,Z\in\vsm$ we define $\ru XYZ\in\vsm'$ as
\[
\ru XYZ:=\bnabla_X(\nabla_YZ)-\bnabla_Y(\nabla_XZ)-\bnabla_{[X,Y]}Z.
\]
For $X,Y,Z,W\in\vsm$ we also define $\rd XYZW\in\fsm'$ as
\[
\rd XYZW(f):=\big(\ru XYZ\big)(fW).
\]
\end{definition}
We conclude pointing out that it is a purely algebraic consequence of the definition, and of the calculus tools developed so far,  that the curvature has the same symmetries it has in the smooth setting (we shall follow the arguments of \cite{Petersen16}):

\begin{proposition}[Symmetries of the curvature] For any $X,Y,Z,W\in\vsm$ and $f\in\fsm$ it holds:
\begin{subequations}
\begin{align}
\label{eq:r1}
\rd XYZW&=-\rd YXZW=\rd ZWXY,\\
\label{eq:r2}
\ru XYZ&+\ru YZX+\ru ZXY=0,\\
\label{eq:r3}
f\rd XYZW&=\rd{fX}{Y}{Z}{W}=\rd{X}{fY}{Z}{W}=\rd{X}{Y}{fZ}{W}=\rd{X}{Y}{Z}{fW}.
\end{align}
\end{subequations}
\end{proposition}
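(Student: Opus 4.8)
The plan is to mimic the classical derivation of the curvature symmetries (as in \cite{Petersen16}) but to replace every pointwise differentiation by its integrated counterpart, moving derivatives onto test objects through the consistency identity \eqref{eq:conscov}, metric compatibility and the defining property of the divergence. Since all the identities below are equalities of elements of $\vsm'$ (or of $\fsm'$ after pairing with $fW$), the purely algebraic parts of the classical argument transfer verbatim. The antisymmetry $\rd XYZW=-\rd YXZW$ in \eqref{eq:r1} is immediate: $V\mapsto\bnabla_XV$ and $X\mapsto\bnabla_XV$ are linear and $[Y,X]=-[X,Y]$, so $\ru YXZ=-\ru XYZ$ straight from the definition. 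For the first Bianchi identity \eqref{eq:r2} I would form the cyclic sum $\sum_{\mathrm{cyc}}\ru XYZ$ and regroup its six second-order terms according to the outer operator; using torsion-freeness $\nabla_YZ-\nabla_ZY=[Y,Z]$ and the linearity of $V\mapsto\bnabla_XV$ this collapses to $\distrlie X{[Y,Z]}+\distrlie Y{[Z,X]}+\distrlie Z{[X,Y]}$, which by antisymmetry of the distributional bracket equals $-\big(\distrlie{[Y,Z]}X+\distrlie{[Z,X]}Y+\distrlie{[X,Y]}Z\big)$ and so vanishes by the Jacobi identity. The Lemma is used precisely to ensure $[X,Y]\in D(\div)$, so that all the iterated brackets are well posed.

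The crux is the antisymmetry in the last two entries, $\rd XYZW=-\rd XYWZ$; although not listed explicitly it is the ingredient that, together with the above, yields the pair symmetry. As $(Z,W)\mapsto\ru XYZ(fW)$ is bilinear, by polarization it suffices to prove $\ru XYZ(fZ)=0$. Expanding the three terms through the definition of $\bnabla$, using $\nabla_X(fZ)=(Xf)Z+f\nabla_XZ$ and metric compatibility $\langle\nabla_XZ,Z\rangle=\tfrac12X(|Z|^2)$, the genuine $L^2$ cross terms $\langle\nabla_XZ,\nabla_YZ\rangle$ cancel and everything reduces to the first-order expression
\[
\ru XYZ(fZ)=-\tfrac12\int\langle\nabla u,\Xi\rangle\,\d\mm,\qquad u:=|Z|^2,
\]
with $\Xi:=(Xf)Y-(Yf)X+f\,\div(X)\,Y-f\,\div(Y)\,X+f[X,Y]\in L^2(T\X)$. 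The main obstacle is now visible: $u=|Z|^2$ lies only in $W^{1,2}\cap L^\infty$ (the carr\'e du champ of test functions need not be a test function), so I cannot differentiate it twice as the smooth proof would. Instead I would show that the $W^{1,2}$-continuous functional $v\mapsto\int\langle\nabla v,\Xi\rangle\,\d\mm$ vanishes identically: for $v\in\fsm$ one integrates by parts to land every derivative on $v$ and invokes the symmetry of the Hessian, obtaining $\int\langle\nabla v,\Xi\rangle=-\int f\big(X(Yv)-Y(Xv)-[X,Y]v\big)\,\d\mm=0$; since $\Xi\in L^2(T\X)$ the functional is $W^{1,2}$-continuous and $\fsm$ is dense in $W^{1,2}$, so it vanishes for $v=u$ too. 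This density step is exactly where the distributional formulation does the work that the smooth argument gets for free.

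Once antisymmetry in $(X,Y)$, antisymmetry in $(Z,W)$ and the first Bianchi identity are in hand, the pair symmetry $\rd XYZW=\rd ZWXY$ follows by the standard algebraic manipulation (write \eqref{eq:r2} for the four cyclic starting points and add), valid verbatim since all terms live in $\fsm'$. Finally, for \eqref{eq:r3}: the identity $\rd XYZ{fW}=f\rd XYZW$ is immediate from the $\fsm$-module structure of $\vsm'$. Tensoriality in the first slot, $\ru{fX}YZ=f\ru XYZ$, I would verify by direct expansion using $\bnabla_{fX}V=f\bnabla_XV$, the Leibniz rule $\bnabla_Y(fV)=f\bnabla_YV+R_V$ with remainder $R_V(W)=\int(Yf)\langle V,W\rangle\,\d\mm$, and $[fX,Y]=f[X,Y]-(Yf)X$; the two stray remainders $\bnabla_{(Yf)X}Z$ and $R_{\nabla_XZ}$ then coincide by one further application of metric compatibility and the definition of divergence, so they cancel. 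Tensoriality in the second slot follows from $\rd XYZW=-\rd YXZW$ combined with the first slot, and in the third slot from the pair symmetry combined with the first slot, which closes all the stated symmetries.
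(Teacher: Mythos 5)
Your proof is correct, and its overall architecture (antisymmetry in the first pair from the definition, first Bianchi from the distributional Jacobi identity, antisymmetry in the last pair as the analytic crux, pair symmetry by the standard algebra, tensoriality last) coincides with the paper's. The two places where you genuinely diverge are worth recording. For the antisymmetry $\rd XYZW=-\rd XYWZ$, the paper does not polarize: it adds the expanded expressions for $\rd XYZW(f)$ and $\rd XYWZ(f)$ directly, uses metric compatibility to collapse the integrand to first-order terms in $\la Z,W\ra$, and then integrates by parts against $\la Z,W\ra\in W^{1,2}(\X)\cap L^\infty(\X)$ to land on $\int\la Z,W\ra\big(\div(Y\div(fX)-X\div(fY))+\div(f[X,Y])\big)\,\d\mm$, which vanishes by \eqref{eq:divle}. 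This sidesteps the issue you correctly flag (that $|Z|^2$, resp.\ $\la Z,W\ra$, is not a test function) without any density argument, because the defining property of the divergence already pairs against arbitrary $W^{1,2}$ functions; your route instead tests the first-order functional on $\fsm$, where the Hessian symmetry is available, and extends by $W^{1,2}$-density using $\Xi\in L^2(T\X)$. Both are valid; the paper's is slightly leaner since the needed cancellation is exactly the content of the Lemma, while yours makes the role of the commutation relation $X(Yv)-Y(Xv)=[X,Y]v$ on test functions more transparent. For \eqref{eq:r3}, the paper only proves last-slot tensoriality, $(f\rd XYZW)(g)=\rd XYZ{fW}(g)$, via the module structure, and then obtains the remaining three slots purely from \eqref{eq:r1} together with the last-pair antisymmetry; your direct expansion of $\ru{fX}YZ$ with the cancellation of $\bnabla_{(Yf)X}Z$ against the Leibniz remainder (via the consistency identity \eqref{eq:conscov}) is more laborious but equally sound, and has the minor merit of not routing first-slot tensoriality through the pair symmetry.
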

\begin{proof} The first in \eqref{eq:r1} is equivalent to the identity $\ru XYZ=-\ru YXZ$ which in turn is a direct consequence of the definition. 

The equality \eqref{eq:r2} follows from the Jacobi identity for the Lie bracket. Indeed, if for a given trilinear map $T:[\vsm]^3\to\vsm'$ we put  $\mathfrak ST(X,Y,Z):=T(X,Y,Z)+T(Y,Z,X)+T(Z,X,Y)$, we have
\[
\begin{split}
\mathfrak S \ru XYZ&=\mathfrak S\bnabla_X\nabla_YZ-\mathfrak S\bnabla_Y\nabla_XZ-\mathfrak S\bnabla_{[X,Y]}Z\\
&=\mathfrak S\bnabla_Z\nabla_XY-\mathfrak S\bnabla_Z\nabla_YX-\mathfrak S\bnabla_{[X,Y]}Z\\
&=\mathfrak S\bnabla_Z(\nabla_XY-\nabla_YX)-\mathfrak S\bnabla_{[X,Y]}Z\\
&=\mathfrak S\big(\bnabla_Z[X,Y]-\bnabla_{[X,Y]}Z\big)\\
&=\mathfrak S\distrlie Z{[X,Y]}=0.
\end{split}
\]
We now claim that
\begin{equation}
\label{eq:zw}
\rd XYZW=-\rd XYWZ.
\end{equation}
To prove this let $f\in\fsm$ and notice that
\[
\begin{split}
\rd XYZW(f)&=\big(\ru XYZ\big)(fW)\\
&=\int-\la\nabla_X(fW),\nabla_YZ\ra-f\la\nabla_YZ,W\ra\div(X)\\
&\qquad+\la\nabla_Y(fW),\nabla_XZ\ra+f\la\nabla_XZ,W\ra\div(Y)-f\la\nabla_{[X,Y]}Z,W\ra\,\d\mm.
\end{split}
\]
Add to this the equality obtained exchanging $Z$ and $W$ and observe that

\[
\begin{split}
&-\la\nabla_X(fW),\nabla_YZ\ra
-f\la\nabla_YZ,W\ra\div(X)
+\la\nabla_Y(fW),\nabla_XZ\ra
+f\la\nabla_XZ,W\ra\div(Y)\\
&-\la\nabla_X(fZ),\nabla_YW\ra
-f\la\nabla_YW,Z\ra\div(X)
+\la\nabla_Y(fZ),\nabla_XW\ra
+f\la\nabla_XW,X\ra\div(Y)\\
& \quad=-X(f)\Big(\la\nabla_YZ,W\ra+\la\nabla_YW,Z\ra\Big)-f\div(X)\Big(\la\nabla_YZ,W\ra+\la\nabla_YW,Z\ra\Big)\\
&\quad\qquad+Y(f)\Big(\la\nabla_XW,Z\ra+\la\nabla_XZ,W\ra\Big)+f\div(Y)\Big(\la\nabla_XZ,W\ra+\la\nabla_XW,Z\ra\Big)\\
&\quad=-Y(\la Z,W\ra)\div(fX) +X(\la Z,W\ra)\div(fY) ,
\end{split}
\]
and that
\[
\begin{split}
-f\la\nabla_{[X,Y]}Z,W\ra-f\la\nabla_{[X,Y]}W,Z\ra&=-f[X,Y](\la Z,W\ra).
\end{split}
\]
Therefore we have, after an integration by parts, that
\[
\begin{split}
(\rd XYZW&+\rd XYWZ)(f)\\
&=\int \la Z,W\ra\Big(\div\big( Y\div(fX)-X\div(fY)\big)+\div(f[X,Y])\Big)\,\d\mm,
\end{split}
\]
and this latter expression vanishes due to \eqref{eq:divle}. This proves our claim \eqref{eq:zw}.

The equality between the first and last term in \eqref{eq:r1} now follows from the first in \eqref{eq:r1}, \eqref{eq:zw} and \eqref{eq:r2}, indeed:
\[
\begin{split}
\rd XYZW&=-\rd ZXYW-\rd YZXW\\
&=\rd ZXWY+\rd YZWX\\
&=-\rd XWZY-\rd WZXY-\rd ZWYX-\rd  WYZX\\
&=2\rd ZWXY+\rd XWYZ+\rd WYXZ\\
&=2\rd ZWXY-\rd YXWZ\\
&=2\rd ZWXY-\rd XYZW,
\end{split}
\]
i.e.\ $2\rd XYZW=2\rd ZWXY$. 

It remains to prove \eqref{eq:r3}. The chain of equalities
\[
(f\rd XYZW)(g)=\rd XYZW(fg)=\big(\ru XYZ\big)(fgW)=\rd XYZ{fW}(g),
\]
valid for any $f,g\in\fsm$ shows that the first and last term in \eqref{eq:r3} coincide. The equality with the others then follows from \eqref{eq:r1}. 
\end{proof}

\begin{remark}{\rm We should not expect  the trace of the sectional curvature to be equal to the Ricci curvature in any sense: this is not the case not even on weighted Riemannian manifolds, where the correct notion of Ricci tensor is the Bakry-\'Emery one. This should be compared to the fact that the trace of the Hessian of a function is not the Laplacian, in general. 

We believe that the Ricci curvature tensor coincides with the trace of the sectional curvature provided the Laplacian is the trace of the Hessian, but the verification of this fact is outside the scope of this note. We just remark that finite dimensional $\RCD$ spaces for which the latter condition holds are called \emph{weakly non-collapsed $\RCD$ spaces} and that they are conjectured to be \emph{non-collapsed $\RCD$ spaces} (see \cite[Remark 1.13]{GDP17}).
}\fr
\end{remark}
\begin{remark}\label{rem:distr}{\rm The use of the terminology `distributional' that we made here is quite an abuse. Indeed, not only we certainly don't have $C^\infty$ functions in this setting but, most importantly, we didn't put any topology on the spaces $\test\X,\vsm$, so that their dual have been considered only in the algebraic sense.

Still, we chose to stick to the use of `distributional' because in our opinion it gives the idea of what is happening: by throwing derivatives on the appropriate test object we can give a meaning to the Riemann tensor. In any case, it is not hard to put appropriate norms on both $\test\X$ and $\vsm$ so that all the operators we considered are continuous.
}\fr
\end{remark}
\def\cprime{$'$} \def\cprime{$'$}

\end{document}